\numberwithin{equation}{section}
\theoremstyle{plain}
\newtheorem{theorem}{Theorem}[section]
\newtheorem{lemma}[theorem]{Lemma}
\newtheorem{corollary}[theorem]{Corollary}
\theoremstyle{definition}
\newtheorem{definition}[theorem]{Definition}
\theoremstyle{remark}
\newtheorem{remark}[theorem]{Remark}
\newtheorem{case[theorem]}{Case}
\title[\parbox{14cm}{\centering{ Averaging operators over nondegenerate quadratic surfaces in finite fields \hspace{1in}}} \quad]{ Averaging operators over nondegenerate quadratic surfaces in finite fields }
\author{ Doowon Koh }
\address{Department of Mathematics\\
Chungbuk National University \\
Cheongju city, Chungbuk-Do 361-763 Korea}
\email{koh131@chungbuk.ac.kr}
\thanks{Key words and phrases: averaging operator, finite fields.\\
This research was supported by Basic Science Research Program through the National Research Foundation of Korea(NRF) funded by the Ministry of Education, Science and Technology(2012010487)
}
\subjclass[2010]{43A32, 43A15, 11T99}
\begin{document}

\begin{abstract}  We study mapping properties of the averaging operator related to the variety $ V=\{x\in \mathbb F_q^d: Q(x)=0\},$ where $Q(x)$ is a nondegenerate quadratic polynomial over a finite field 
$\mathbb F_q$ with $q$ elements. This paper is  devoted to eliminating the logarithmic bound appearing in  the paper \cite{KS11}. As a consequence, we settle down the averaging problems over the quadratic surfaces $V$ in the case when  the dimensions $d\geq 4$ are even and $V$ contains a $d/2$-dimensional subspace.
\end{abstract} 
\maketitle

\section{Introduction}

Let $V \subset \mathbb R^d$ be a smooth hypersurface and $d\sigma$ a smooth, compactly supported surface measure on $V.$ An averaging operator $A$ over $V$ is given by 
$$Af(x)=f\ast d\sigma(x)=\int_V f(x-y) d\sigma(y)$$
where $f$ is a complex valued function on $\mathbb R^d.$
In this Euclidean setting, the averaging problem is to determine  the optimal range of exponents $ 1\leq p, r\leq \infty$ such that  
\begin{equation}\label{AI} \|f\ast d\sigma \|_{L^r(\mathbb R^d)}\leq C_{p,r,d} \|f\|_{L^p(\mathbb R^d)}, ~~f\in \mathcal{S}(\mathbb R^d)\end{equation}
where $\mathcal{S}(\mathbb R^d)$ denotes the space of Schwartz functions. When $V$ is the unit sphere $\mathbb S^{d-1},$  this problem is closely related to  regularity estimates of the solutions to the wave equation at time $t=1$,  and it was studied by R.S. Strichartz \cite{St70}. It is well known that $L^p-L^r$ averaging results can be obtained by  the decay estimates of the Fourier transform of the surface measure $d\sigma$ on $V$.
For instance, if $|\widehat{d\sigma}(\xi)|=\left| \int_V e^{-2\pi i x\cdot \xi} d\sigma(x)\right|\lesssim (1+|\xi|)^{-\alpha}$ for some $\alpha>0$, then the averaging inequality (\ref{AI}) holds whenever  
$$1\leq p\leq 2, \quad \frac{1}{p}-\frac{1}{2}\leq \frac{1}{2} \left(\frac{\alpha}{\alpha+1} \right), ~~\mbox{and}~~ 
 r=p^\prime,$$
where $p^\prime$ denotes the exponent conjugate to $p$ (see  \cite{{Li73},{St93}}). Thus, if $ |\widehat{d\sigma}(\xi)|\lesssim (1+|\xi|)^{-(d-1)/2}$ and $(1/p, 1/r)=( d/(d+1), 1/(d+1) ),$ then the averaging estimate (\ref{AI}) holds. Since $L^1-L^1$ and $L^\infty-L^\infty$ estimates are clearly possible,  we see from the interpolation theorem that  if  $ |\widehat{d\sigma}(\xi)|\lesssim (1+|\xi|)^{-(d-1)/2}, $  then $L^p-L^r$ estimates hold whenever $(1/p, 1/r)$ lies in the triangle $\Delta_d$ with vertices $(0,0), (1,1),$ and $(d/(d+1), 1/(d+1) ).$ Moreover, it is well known that  $L^p-L^r$ estimates are impossible if $(1/p,1/r)$ lies outside of the triangle $\Delta_d.$ Such analogous phenomena were also observed in the finite field setting (see, for example, \cite{{CSW08},{KS12}, {KS11}}).\\
 On the other hand,  if  the optimal Fourier decay estimate  of the surface measure $d\sigma$  is given by 
$$ |\widehat{d\sigma}(\xi)|\lesssim (1+|\xi|)^{-\alpha} \quad\mbox{for some}~~\alpha < (d-1)/2,$$
then it is in general hard to prove sharp averaging results and some technical arguments are required  to deal with the case (see \cite{IS96}).
In the finite field case,  this was also pointed out by the authors in \cite{CSW08}.\\

\noindent  As an analogue of Euclidean averaging problems,  Carbery, Stones, and Wright \cite{CSW08} initially introduced and studied the averaging problem in finite fields 
and the author with Shen has recently investigated the averaging problem over homogeneous varieties. In this introduction, we shortly review notation and definitions for averaging problems in the finite field setting and
readers are  referred to \cite{KS11} for further information and motivation on the averaging problem.
Let $\mathbb F_q^d$ be a $d$-dimensional vector space over a finite field $\mathbb F_q$ with $q$ elements.
We endow the space $\mathbb F_q^d$ with a normalized counting measure $``dx"$.
Let $V \subset \mathbb F_q^d$ be an algebraic variety.  Then a normalized surface measure $\sigma$ supported on $V$  can be defined by the relation
$$ \int f(x)~ d\sigma(x) = \frac{1}{|V|} \sum_{x\in V} f(x)$$
where $|V|$ denotes the cardinality of $V$ (see \cite{MT04}).
An averaging operator $A$ can be defined by 
$$ Af(x)=f\ast d\sigma(x)= \int f(x-y)~ d\sigma(y)= \frac{1}{|V|} \sum_{y\in V} f(x-y) $$
where both $f$ and $Af$ are functions on $(\mathbb F_q^d, dx).$
In this setting the averaging problem over $V$ is to determine  $1\leq p, r\leq \infty$ such that 
\begin{equation}\label{Apq} \|Af\|_{L^r(\mathbb F_q^d, dx) }\leq C \|f\|_{L^p(\mathbb F_q^d, dx)},\end{equation}
where the constant $C>0$ is independent of  functions $f$ and $q$, the cardinality of the underlying finite field $\mathbb F_q.$
\begin{definition} Let $1\leq p,r \leq \infty.$
We denote by  $ A(p\to r)\lesssim 1$ to  indicate that the averaging inequality (\ref{Apq}) holds.
\end{definition}

The main purpose of this paper is to obtain  the complete $L^p-L^r$ estimates of the averaging operators over varieties determined by  nondegenerate quadratic form over $\mathbb F_q$.
Let $Q(x) \in \mathbb F_q[x_1, \dots, x_d]$ be a nondegenerate quadratic form. 
Define a variety 
$$ S=\{x\in \mathbb F_q^d: Q(x)=0\}.$$
We shall name this kind of varieties as a nondegenerate quadratic surface in $\mathbb F_q^d.$
Since $Q(x)$ is a nondegenerate quadratic form,  it can be transformed into a diagonal form $a_1x_1^2+\dots+ a_dx_d^2$ with $a_j\neq 0$ by means of a linear substitution (see \cite{LN97}).
Therefore, we may assume that any nondegenerate quadratic surface can be written by the form
\begin{equation} \label{defv} S=\{x\in \mathbb F_q^d : a_1x_1^2 +\cdots+ a_d x_d^2=0\}\end{equation}
where  $a_j\in \mathbb F_q \setminus \{0\}, j=1,\dots,d.$   The necessary conditions for the averaging estimates over $S$ were given in \cite{CSW08, KS11}. In fact, 
 $A(p\to r)\lesssim 1 $ only if $(1/p, 1/r)$ lies in the convex hull of $(0,0), (0,1), (1,1),$ and $ (d/(d+1), 1/(d+1)).$ 
It is known from \cite{KS11} that  this necessary conditions for $A(p\to r)\lesssim 1$ are  sufficient conditions  if the dimension $d\geq 3$ is odd.
On the other hand, it was observed in \cite{KS11} that if  $d\geq 4$ is even and $S$ contains a subspace $H$ with $|H|=q^{d/2},$ then $A(p\to r)\lesssim 1$ only if $(1/p, 1/r)$ lies in the convex hull of 
\begin{equation}\label{imnec}(0,0), (0,1), (1,1), \left( \frac{d^2-2d+2}{d(d-1)}, ~\frac{1}{d-1}\right),~~\mbox{and} ~~\left( \frac{d-2}{d-1}, ~ \frac{d-2}{d(d-1)}\right). \end{equation}
In this paper we show that  (\ref{imnec})  is also the sufficient conditions for $A(p\to r)\lesssim 1$  in the specific case when the variety $S$ contains $d/2$-dimensional subspace with $d\geq 4$ even.
See Figure \ref{fig1}. 

\begin{figure}
\centering
\includegraphics[width=0.8\textwidth]{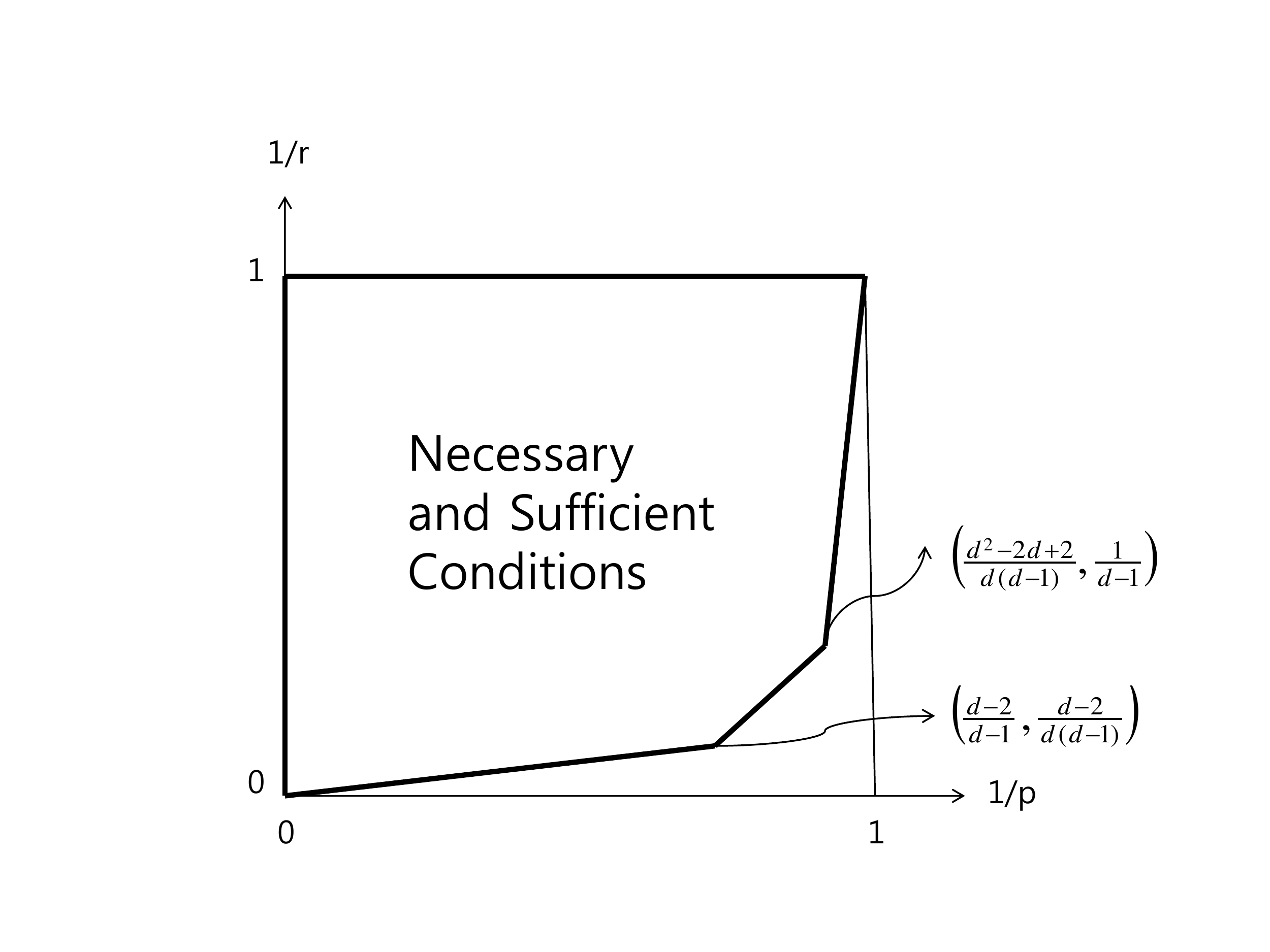}
\caption{Necessary and Sufficient Conditions for $A(p\to r)\lesssim 1$ in the case that $d\geq 4$ is even and $S$ contains a $d/2$-dimensional subspace}
\label{fig1}
\end{figure}
\subsection{Statement of main result}
\begin{theorem}\label{mainthm}
 Let $d\sigma$ be the normalized surface measure on the nondegenerate quadratic surface $S\subset \mathbb F_q^d,$ as defined  in (\ref{defv}).
Suppose that $d\geq 4$ is an even integer and $S$ contains a $d/2$-dimensional subspace. Then $A(p\to r)\lesssim 1$ if and only if 
$(1/p, 1/r)$ lies in the convex hull of 
$$(0,0), (0,1), (1,1), \left( \frac{d^2-2d+2}{d(d-1)}, ~\frac{1}{d-1}\right),~~\mbox{and} ~~\left( \frac{d-2}{d-1}, ~ \frac{d-2}{d(d-1)}\right).$$
\end{theorem}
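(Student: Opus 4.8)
\emph{Plan of proof.} Only sufficiency requires an argument, the necessity being recorded in \cite{KS11}. The first step is a routine reduction. The vertices $(0,0),(0,1),(1,1)$ correspond to the estimates $A(\infty\to\infty)$, $A(\infty\to1)$ and $A(1\to1)$, all trivial since $d\sigma$ is a probability measure; and because $S=-S$ the measure $d\sigma$ is symmetric, so $A$ is self-adjoint and $A(p\to r)\lesssim1$ is equivalent to $A(r'\to p')\lesssim1$. A short computation shows that this duality interchanges the two remaining vertices $\bigl(\frac{d^2-2d+2}{d(d-1)},\frac{1}{d-1}\bigr)$ and $\bigl(\frac{d-2}{d-1},\frac{d-2}{d(d-1)}\bigr)$. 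Hence, by interpolation, the whole theorem reduces to the single estimate $A(p_0\to r_0)\lesssim1$ at $(1/p_0,1/r_0)=\bigl(\frac{d^2-2d+2}{d(d-1)},\frac{1}{d-1}\bigr)$, i.e. $r_0=d-1$ and $\frac1{p_0}-\frac1{r_0}=\frac{d-2}{d}$.

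Next I would make the Fourier side explicit. After diagonalizing $Q$ and completing the square, a standard Gauss sum evaluation gives, for $m\neq0$, that $|\widehat{d\sigma}(m)|\asymp q^{-(d-2)/2}$ when $m$ lies on the dual cone $S^{\ast}=\{m\in\mathbb F_q^d:Q^{\ast}(m)=0\}$, where $Q^{\ast}(x)=\sum_j a_j^{-1}x_j^2$, and $|\widehat{d\sigma}(m)|\asymp q^{-d/2}$ otherwise. The slow decay $q^{-(d-2)/2}$ along the dual cone is exactly what degrades the necessary conditions in the even-dimensional subspace-containing case; in particular $\|A-A_0\|_{L^2\to L^2}\asymp q^{-(d-2)/2}$, where $A_0f$ denotes the mean of $f$. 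Interpolating this against the elementary bounds $\|A-A_0\|_{L^1\to L^1}\lesssim1$ and $\|A-A_0\|_{L^1\to L^\infty}\lesssim q$ (the convolution kernel of $A-A_0$ is $d\sigma$ minus a constant, of sup-norm $\asymp q$) only yields $A(p_0\to r_0)$ up to a factor $q^{c_d}$ with $c_d=\frac{d-2}{d(d-1)}>0$; \cite{KS11} improves the loss to a factor $\log q$, and the point of the present argument is to obtain the bound with no loss at all.

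To remove the loss I would use the hypothesis that $S$ contains a $d/2$-dimensional subspace $H$. Since $Q$ has full rank $d$, $H$ is maximal totally isotropic, so $H^{\perp}=H$ and the polar form $B$ of $Q$ induces a perfect pairing between $\mathbb F_q^d/H$ and the dual space of $H$. Decompose $\mathbb F_q^d$ into the $q^{d/2}$ cosets of $H$, write $f=\sum_v f^{(v)}$ with $f^{(v)}=f\mathbf 1_{v+H}$, and treat each $Af^{(v)}$ separately. The structural observation is that on a coset $v+H$ the restriction of $Q$ is affine-linear in the $H$-variable, because $Q(v+h)=Q(v)+B(v,h)$ for $h\in H$; consequently $Af^{(v)}$ is constant on $v+H$ itself, and at a point $x$ outside $v+H$ it equals, up to the normalization $1/|S|$, the value of the affine Radon transform of $f^{(v)}$ over the hyperplanes of $H\cong\mathbb F_q^{d/2}$, with the hyperplane's direction fixed by the coset of $x$ and its position varying with $x$ inside that coset. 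The constant pieces of the $Af^{(v)}$ have pairwise disjoint supports, so (using $r_0\ge p_0$) they assemble losslessly, and the exponent bookkeeping — which is precisely where the numbers $\frac{d^2-2d+2}{d(d-1)}$ and $\frac1{d-1}$ are forced — shows that each one is $\lesssim\|f^{(v)}\|_{p_0}$.

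The hard part will be assembling the Radon pieces. Estimating each of them through the operator norm of the hyperplane Radon transform on $\mathbb F_q^{d/2}$ and then summing by the triangle inequality reintroduces a power of $q$. What must be used instead is that, on any fixed coset, the Radon pieces contributed by the other cosets are transforms in pairwise distinct hyperplane directions, so that their sum is controlled not by a single Radon estimate but by a \emph{sum over all directions} — in effect a composition of the Radon transform with its adjoint on $\mathbb F_q^{d/2}$, a far better behaved operator — whose $L^{r_0}$ norm can be expanded and bounded losslessly. This orthogonality-in-directions step (equivalently, a sharp, rather than dyadically summed, extension estimate for the dual quadric $S^{\ast}$, to which the part of $A$ carrying the slow frequency decay can alternatively be reduced via the identity $\Pi_{S^{\ast}}=\mathcal E_{S^{\ast}}\circ\mathcal R_{S^{\ast}}$ for the extension and restriction operators of $S^{\ast}$) is the crux of the matter and the place where the logarithmic factor of \cite{KS11} disappears.
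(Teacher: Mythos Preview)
Your opening reduction is correct and matches the paper: necessity is from \cite{KS11}, the three trivial vertices together with duality reduce sufficiency to the single endpoint $(1/p_0,1/r_0)=\bigl(\tfrac{d^2-2d+2}{d(d-1)},\tfrac{1}{d-1}\bigr)$, and your description of $(d\sigma)^\vee$ agrees with Corollary~\ref{decay}.

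From that point on your route diverges completely from the paper's, and the decisive step is left as a promise rather than a proof. The paper never uses the subspace $H$ in the sufficiency argument; Theorems~\ref{res1} and~\ref{res2} establish the endpoint bound for \emph{every} nondegenerate quadratic surface in even dimension $d\ge4$, the subspace hypothesis entering only on the necessity side. The actual method writes $d\sigma=\widehat K+1$ with $K=(d\sigma)^\vee-\delta_0$, reduces to $\|f\ast\widehat K\|_{d-1}\lesssim\|f\|_{p_0}$, and decomposes $f=\sum_k 2^{-k}E_k$ into level sets \`a la Lewko--Lewko. The engine is Lemma~\ref{mainlem}: a three-regime bound for $\|E\ast\widehat K\|_{L^{(d-1)/2}}$ (respectively $L^6$ when $d=4$) in terms of $|E|$, obtained by interpolating the trivial $\|E\ast\widehat K\|_\infty\lesssim q^{-d+1}|E|$ against a refined $L^2$ estimate that separates the frequencies on and off the dual cone $S^\ast$. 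One then expands $\|f\ast\widehat K\|_{d-1}^2=\|(f\ast\widehat K)^2\|_{(d-1)/2}$ and sums the resulting geometric series case by case; $d=4$ requires a further H\"older splitting into $L^6\times L^2$ and a nine-case analysis.

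Your coset-of-$H$/Radon picture is structurally sound as far as it goes, but the final paragraph is where the proof has to happen, and it is only an outline. You correctly observe that assembling the Radon pieces by the triangle inequality costs a power of $q$, and you assert that ``orthogonality in directions'' --- equivalently, a sharp $L^{p_0}\to L^{r_0}$ bound for the spectral projector $\Pi_{S^\ast}=\mathcal E_{S^\ast}\mathcal R_{S^\ast}$ --- recovers the loss. But that assertion \emph{is} the theorem: the slow-decay part of $f\ast\widehat K$ is exactly $\asymp q^{-(d-2)/2}\,\Pi_{S^\ast\setminus\{0\}}f$, so you have translated the endpoint averaging bound into the corresponding endpoint cone-extension bound without indicating how to prove the latter. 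Nothing in your sketch supplies the claimed control on the sum over directions, and the finite-field restriction estimate for cones at these exponents is not a result you can simply quote. The paper sidesteps this entirely by working with level sets of $f$ rather than geometric pieces, so that once the $L^2$ and $L^\infty$ bounds on $E\ast\widehat K$ are in hand the remainder is elementary (if lengthy) bookkeeping with convergent geometric series.
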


\begin{remark} If the dimension $d\geq 4$ is even, then the diagonal entries $a_j$ can be properly chosen so that  $S$  contains a $d/2$-dimensional subspace $H.$  Such an example is essentially the following one (see Theorem 4.5.1 of \cite{Sc69}):
 $S=\{x\in \mathbb F_q^d: \sum_{k=1}^{d} (-1)^{k+1} x_k^2 =0\}$ and $H=\{(t_1, t_1, t_2, t_2, \dots, t_{d/2}, t_{d/2}) \in \mathbb F_q^d: t_1, t_2, \dots, t_{d/2} \in \mathbb F_q\}.$
\end{remark}

\begin{remark}\label{remgo} From the observation (\ref{imnec}), we only need to prove the `` if " part of Theorem \ref{mainthm}.  Since $dx$ is the normalized counting measure on $\mathbb F_q^d$, it follows from Young's inequality that  $A(p\to r)\lesssim 1 $ for $1\leq r\leq p\leq \infty.$ Thus, by duality and the interpolation theorem, it will be enough to prove that 
\begin{equation}\label{rema} \|f\ast d\sigma\|_{L^{d-1}(\mathbb F_q^d, dx)} \lesssim \|f\|_{L^{d(d-1)/(d^2-2d+2)}(\mathbb F_q^d, dx)}~~\mbox{for all functions}~~f ~~\mbox{on}~~\mathbb F_q^d.\end{equation}
The authors in \cite{KS11} showed that this inequality holds for all characteristic functions on subsets of $\mathbb F_q^d.$ 
Here, we improve upon their work by obtaining the strong-type estimate.
\end{remark}

\subsection{Outline of  this paper} In the remaining parts of this paper, we focus on providing the detail proof of Theorem \ref{mainthm}.
In Section 2,  we review  the Fourier analysis in finite fields and prove key lemmas which are essential in proving our main theorem.
The proof of Theorem \ref{mainthm} for even dimensions $d\geq 6$ will be completed in Section 3.  Namely,  when $d\geq 6$ is any even integer, the inequality (\ref{rema}) will be proved in Section 3.
In the final section, we finish the proof of Theorem \ref{mainthm} by proving the inequality (\ref{rema}) for $d=4.$

\section{key lemmas}
In this section we drive key lemmas which play a crucial role in proving Theorem \ref{mainthm}.
We begin by reviewing the Discrete Fourier analysis and readers can be reffered to \cite{KS11} for more information on it.
Let $\mathbb F_q$ be a finite field with $q$ elements. Throughout this paper, we assume that $q$ is a power of odd prime so that the characteristic of $\mathbb F_q$ is greater than two. We denote by $\chi$ a  nontrivial additive character of $\mathbb F_q.$
Recall that  the orthogonality relation of the canonical additive character $\chi$ says that
$$ \sum_{x\in \mathbb F_q^d} \chi(m\cdot x)=\left\{ \begin{array}{ll} 0  \quad&\mbox{if}~~ m\neq (0,\dots,0)\\
q^d  \quad &\mbox{if} ~~m= (0,\dots,0), \end{array}\right.$$
where $\mathbb F_q^d$ denotes the $d$-dimensional vector space over $\mathbb F_q$ and $m\cdot x$ is the usual dot-product notation.
Denote by $(\mathbb F_q^d, dx)$ the vector space over $\mathbb F_q$, endowed with the normalized counting measure $``dx".$
Its dual space will be denoted by $(\mathbb F_q^d, dm)$ and we endow it with a counting measure $``dm".$
If $f: (\mathbb F_q^d, dx) \rightarrow \mathbb C,$ then the Fourier transform of the function $f$ is defined on $(\mathbb F_q^d, dm)$:
$$ \widehat{f}(m)=\int_{\mathbb F_q^d} f(x) \chi(-x\cdot m)~dx=  \frac{1}{q^d} \sum_{x\in \mathbb F_q^d} f(x) \chi(-x\cdot m)\quad \mbox{for}~~m\in \mathbb F_q^d.$$
We also recall  the Plancherel theorem:
$$ \sum_{m\in \mathbb F_q^d} |\widehat{f}(m)|^2 = \frac{1}{q^d} \sum_{x\in \mathbb F_q^d} |f(x)|^2.$$
Throughout this paper, we identify the set $E \subset \mathbb F_q^d$ with the characteristic function on the set $E.$
We denote by $(d\sigma)^{\vee}$ the inverse Fourier transform of the normalized surface measure $d\sigma$ on $S$  in (\ref{defv}) .
Recall that
$$ (d\sigma)^{\vee}(m)=\int_{S}\chi(m\cdot x) ~d\sigma(x) = \frac{1}{|S|} \sum_{x\in S} \chi(m\cdot x). $$

\subsection{Gauss sums and estimates of $(d\sigma)^{\vee}$}
Let $\eta$  denote  the quadratic character of ${\mathbb F_q}.$ For each $t\in {\mathbb F_q},$ the Gauss sum $G_t(\eta,\chi)$ is defined by 
$$ G_t(\eta,\chi)=\sum_{s\in {\mathbb F_q}\setminus \{0\}} \eta(s)\chi(ts).$$
The absolute value  of the Gauss sum is given as follows (see \cite{LN97, IKow04}):
$$|G_t(\eta,\chi)|=\left\{\begin{array}{ll} q^{\frac{1}{2}} \quad &\mbox{if}~~t\neq 0\\
0\quad&\mbox{if} ~~t=0,\end{array}\right..$$
It turns out that the inverse Fourier transform of $d\sigma$ can be written in terms of   the Gauss sum.
The following  was given in Lemma 4.1 of \cite{KS11}.

\begin{lemma} \label{Fdecay} Let $S$ be the variety in $\mathbb F_q^d$ as defined  in (\ref{defv}), and let $d\sigma$ be the normalized surface measure on $S.$
If $d\geq 2$ is even, then we have
 $$ (d\sigma)^\vee(m)=\left\{ \begin{array}{ll} q^{d-1} |S|^{-1}+ \frac{G_1^d}{|S|} (1-q^{-1})\eta(a_1\cdots a_d) \quad &\mbox{if}~~ m=(0,\dots,0)\\
\frac{G_1^d}{|S|}(1-q^{-1})\eta(a_1\cdots a_d)  \quad &\mbox{if}~~ m\neq (0,\dots,0),~ \frac{m_1^2}{a_1}+\cdots+ \frac{m_d^2}{a_d}=0\\
 -\frac{G_1^{d}}{q|S|} \eta(a_1 \cdots  a_d)  \quad &\mbox{if} ~~ \frac{m_1^2}{a_1}+\cdots+ \frac{m_d^2}{a_d}\neq 0. \end{array}\right.$$
Here and throughout this paper we denote by $\eta$  the quadratic character of ${\mathbb F_q}$  and we write $G_1$ for the Gauss sum  $G_1(\eta,\chi).$
\end{lemma}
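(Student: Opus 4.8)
The plan is to evaluate the character sum $(d\sigma)^\vee(m)=|S|^{-1}\sum_{x\in S}\chi(m\cdot x)$ directly. First I would detect the defining equation of $S$ (writing $Q(x)=a_1x_1^2+\cdots+a_dx_d^2$) by the orthogonality relation for $\chi$, replacing the indicator function of $S$ by $q^{-1}\sum_{t\in\mathbb F_q}\chi(tQ(x))$. This gives
\[
\sum_{x\in S}\chi(m\cdot x)\;=\;\frac{1}{q}\sum_{t\in\mathbb F_q}\ \sum_{x\in\mathbb F_q^d}\chi\bigl(tQ(x)+m\cdot x\bigr).
\]
The contribution of $t=0$ is $q^{-1}\sum_{x\in\mathbb F_q^d}\chi(m\cdot x)$, which by orthogonality equals $q^{d-1}$ when $m=(0,\dots,0)$ and vanishes otherwise; this accounts for the extra summand $q^{d-1}|S|^{-1}$ in the first case of the statement.

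For $t\neq 0$ the inner sum splits as a product over coordinates, $\prod_{j=1}^d\sum_{x_j\in\mathbb F_q}\chi(ta_jx_j^2+m_jx_j)$. The next step is to evaluate each one-variable sum by completing the square --- here is where I use that the characteristic of $\mathbb F_q$ is odd, so that dividing by $2$ is legitimate --- obtaining $\sum_{s\in\mathbb F_q}\chi(as^2+bs)=\eta(a)\,\chi\bigl(-b^2/(4a)\bigr)\,G_1$ for $a\neq 0$, with $G_1=G_1(\eta,\chi)$. Multiplying the $d$ factors produces
$\eta(t^d)\,\eta(a_1\cdots a_d)\,G_1^d\,\chi\bigl(-\tfrac{1}{4t}\sum_{j=1}^d m_j^2/a_j\bigr)$, and since $d$ is even we have $\eta(t^d)=\eta(t)^d=1$ for every $t\neq 0$, so the factor $\eta(t^d)$ disappears.

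It then remains to sum over $t\in\mathbb F_q\setminus\{0\}$ the phase $\chi\bigl(-\ell/(4t)\bigr)$, where $\ell=\sum_{j=1}^d m_j^2/a_j$. I would split into two cases: if $\ell=0$ the sum is simply $q-1$; if $\ell\neq 0$ then $t\mapsto -\ell/(4t)$ is a bijection of $\mathbb F_q\setminus\{0\}$ onto itself, so the sum equals $\sum_{u\neq 0}\chi(u)=-1$. Combining the $t=0$ and $t\neq 0$ contributions, dividing by $|S|$, using $\tfrac{q-1}{q}=1-q^{-1}$, and observing that $m=(0,\dots,0)$ always falls into the subcase $\ell=0$, yields exactly the three cases in the statement. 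The argument is essentially bookkeeping with Gauss sums, so I do not anticipate a genuine obstacle; if anything, the only delicate points are isolating the $t=0$ term and the parity observation $\eta(t^d)=1$, which are precisely the places where the standing hypotheses (odd characteristic, $d$ even) enter.
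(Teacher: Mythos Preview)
Your argument is correct. The paper does not actually prove this lemma; it simply cites it as Lemma~4.1 of \cite{KS11}. Your direct computation --- detecting $Q(x)=0$ with $q^{-1}\sum_{t}\chi(tQ(x))$, completing the square to reduce each one-variable sum to $\eta(ta_j)\,G_1\,\chi(-m_j^2/(4ta_j))$, using $\eta(t)^d=1$ for even $d$, and then evaluating $\sum_{t\neq 0}\chi(-\ell/(4t))$ according to whether $\ell=0$ --- is the standard route and is almost certainly what the cited reference does as well. There is nothing to correct.
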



 Lemma \ref{Fdecay} yields the following corollary. 

\begin{corollary}\label{decay}Let $S$ be the variety in $\mathbb F_q^d$ as defined  in (\ref{defv}) and let $d\sigma$ be the normalized surface measure on $S.$
If $d\geq 4$ is even, then we have
\begin{equation}\label{sizeS} |S|=q^{d-1}+ G_1^d (1-q^{-1}) \eta(a_1\cdots a_d) \sim q^{d-1},\end{equation}
and
\begin{equation}\label{evendecay}|(d\sigma)^\vee(m)|\sim\left\{ \begin{array}{ll} q^{-\frac{(d-2)}{2}} \quad &\mbox{if}~~ m\neq (0,\dots,0),~ \frac{m_1^2}{a_1}+\cdots+ \frac{m_d^2}{a_d}=0\\
                                                                                                                                     q^{-\frac{d}{2}} \quad &\mbox{if} ~~ \frac{m_1^2}{a_1}+\cdots+ \frac{m_d^2}{a_d}\neq 0.\end{array}\right.\end{equation}
\end{corollary}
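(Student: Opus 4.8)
The plan is to deduce both statements directly from Lemma \ref{Fdecay}, using only two elementary facts: that $|G_1|=q^{1/2}$, so $|G_1^d|=q^{d/2}$, and that the quadratic character $\eta$ has modulus $1$ on $\mathbb F_q\setminus\{0\}$, so that $|\eta(a_1\cdots a_d)|=1$ (each $a_j\neq 0$); together with the tautology $(d\sigma)^\vee(0,\dots,0)=\int_S 1\,d\sigma=1$, which holds because $\sigma$ is a probability measure on $S$.

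First I would establish (\ref{sizeS}). Evaluating the first case of Lemma \ref{Fdecay} at $m=(0,\dots,0)$ and equating the result to $(d\sigma)^\vee(0,\dots,0)=1$ gives
$$1=\frac{q^{d-1}}{|S|}+\frac{G_1^d}{|S|}(1-q^{-1})\eta(a_1\cdots a_d),$$
and multiplying through by $|S|$ yields the asserted identity $|S|=q^{d-1}+G_1^d(1-q^{-1})\eta(a_1\cdots a_d)$. (This is also the classical Lidl--Niederreiter count for the zeros of a diagonal quadratic form, but extracting it from Lemma \ref{Fdecay} is quickest.) For the asymptotic $|S|\sim q^{d-1}$ I would bound the second term in modulus by $q^{d/2}(1-q^{-1})<q^{d/2}$; since $d\geq 4$ is even we have $d/2\leq d-2<d-1$, so $|S|=q^{d-1}\bigl(1+O(q^{-(d/2-1)})\bigr)$, which is comparable to $q^{d-1}$ with absolute constants (the finitely many small values of $q$ being harmless). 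In particular $|S|^{-1}\sim q^{-(d-1)}$.

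Next I would read off (\ref{evendecay}) from the second and third cases of Lemma \ref{Fdecay} by substituting $|G_1^d|=q^{d/2}$, $|\eta(a_1\cdots a_d)|=1$, $1-q^{-1}\sim 1$, and $|S|^{-1}\sim q^{-(d-1)}$. In the case $m\neq(0,\dots,0)$ with $m_1^2/a_1+\cdots+m_d^2/a_d=0$ this gives $|(d\sigma)^\vee(m)|\sim q^{d/2}\cdot q^{-(d-1)}=q^{-(d-2)/2}$, and in the case $m_1^2/a_1+\cdots+m_d^2/a_d\neq 0$ it gives $|(d\sigma)^\vee(m)|\sim q^{d/2}\cdot q^{-1}\cdot q^{-(d-1)}=q^{-d/2}$, exactly as stated.

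I do not expect a genuine obstacle here: the corollary is bookkeeping once Lemma \ref{Fdecay} is available. The only point deserving care is the uniformity of the implied constants in $q$, and this is precisely where the hypothesis $d\geq 4$ is used: one needs the correction term in $|S|$ to be of strictly smaller order than $q^{d-1}$, i.e.\ $d/2<d-1$. For $d=2$ this fails --- there the identity reads $|S|=q+(q-1)\eta(-a_1a_2)$, which equals $1$ when $-a_1a_2$ is a non-square --- so the restriction $d\geq 4$ is essential rather than merely convenient.
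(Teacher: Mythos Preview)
Your argument is correct and follows essentially the same route as the paper's proof: evaluate the $m=0$ case of Lemma~\ref{Fdecay} against the trivial identity $(d\sigma)^\vee(0,\dots,0)=1$ to obtain the formula for $|S|$, then use $|G_1|=q^{1/2}$ and $|S|\sim q^{d-1}$ to read off the size estimates in the remaining two cases. Your added remark on why the hypothesis $d\geq 4$ is genuinely needed (the $d=2$ case can collapse to $|S|=1$) is a nice clarification beyond what the paper records.
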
 
\begin{proof} By the definition of $(d\sigma)^\vee(0,\ldots,0)$, it is clear that $ (d\sigma)^\vee(0,\ldots,0)=1.$ Comparison with Lemma \ref{Fdecay} shows that
$$ |S|=q^{d-1}+ G_1^d (1-q^{-1})  \eta(a_1\cdots a_d).$$
Since $|G_1|=q^{1/2}$, it follows that $|S|\sim q^{d-1} $ for $d\geq 4$ even. This proves (\ref{sizeS}). 
The inequality (\ref{evendecay}) follows immediately from Lemma \ref{Fdecay}  because $|G_1|=q^{1/2}$ and $|S|\sim q^{d-1}$ for $d\geq 4$ even.
\end{proof}
\begin{remark} It is clear from (\ref{evendecay}) that  if $d\geq 4$ is even and $S$ is any nondegenerate quadratic surface in $\mathbb F_q^d,$ then
\begin{equation}\label{ko} |(d\sigma)^\vee(m)|=\left|\frac{1}{|S|} \sum_{x\in S} \chi(m\cdot x)\right| \sim \frac{1}{q^{d-1}}\left| \sum_{x\in S} \chi(m\cdot x)\right| \lesssim q^{-\frac{(d-2)}{2}} ~~\mbox{for} ~~m \in \mathbb F_q^d \setminus \{(0,\dots, 0)\}.\end{equation}
\end{remark}

\subsection{ Bochner-Riesz kernel}
Recall that $d\sigma$ is the normalized surface measure on the nondegenerate quadratic surface $S$ . 
In the finite field setting, the Bochner-Riesz kernel $K$  is a function on $(\mathbb F_q^d, dm)$ and it satisfies that  $ K = (d\sigma)^\vee -\delta_0.$
Recall that $dm$ denotes the counting measure on $\mathbb F_q^d.$
Notice that $K(m)=0$ if $m=(0,\ldots,0)$, and $ K(m)= (d\sigma)^\vee(m)$ otherwise.
Also observe that 
$$d\sigma = \widehat{K} +\widehat{\delta_0}= \widehat{K}+ 1.$$
Here, the last equality follows because  $\delta_0$ is defined on the vector space with the  counting measure $dm,$ and  its Fourier transform $\widehat{\delta_0}$ is defined on the dual space with the normalized counting measure $dx.$   More precisely, if $x\in (\mathbb F_q^d, dx),$ then 
$$\widehat{\delta_0}(x) =\int_{m\in \mathbb F_q^d} \chi(-m\cdot x) \delta_0(m)~dm= \sum_{m\in \mathbb F_q^d} \chi(-m\cdot x) \delta_0(m) = 1.$$ 
Our main lemma  is as follows.

\begin{lemma}\label{mainlem} Suppose that $d\geq 6$ is even. Then, for every $E\subset \mathbb F_q^d,$  we have
\begin{equation}\label{six} \|E\ast \widehat{K}\|_{L^{\frac{d-1}{2}}(\mathbb F_q^d,dx)}\lesssim \left\{\begin{array}{ll} 
q^{\frac{-d^2+2d-3}{d-1}} |E|^{\frac{d-3}{d-1}}\quad &\mbox{if} ~~ 1\leq |E|\leq q^{\frac{d-2}{2}}\\
q^{\frac{-d^2+d-1}{d-1}}|E|\quad &\mbox{if} ~~q^{\frac{d-2}{2}}\leq|E|\leq q^{\frac{d}{2}}\\
 q^{-d+1}|E|^{\frac{d-3}{d-1}} \quad &\mbox{if} ~~ q^{\frac{d}{2}}\leq |E|\leq q^d.\end{array}\right.\end{equation}
 where $K$ is the Bochner-Riesz kernel. On the other hand,  for every  $E\subset \mathbb F_q^4,$ it follows that
\begin{equation}\label{four}\|E\ast \widehat{K}\|_{L^{6}(\mathbb F_q^4,dx)}\lesssim \left\{\begin{array}{ll} 
q^{-\frac{19}{6}} |E|^{\frac{5}{6}}\quad &\mbox{if} ~~ 1\leq |E|\leq q\\
q^{-\frac{10}{3}}|E|\quad &\mbox{if} ~~q\leq|E|\leq q^2\\
q^{-3}|E|^{\frac{5}{6}} \quad &\mbox{if} ~~ q^2\leq |E|\leq q^4.\end{array}\right.\end{equation}
\end{lemma}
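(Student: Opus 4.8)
The plan is to estimate $\|E\ast\widehat{K}\|_{L^{(d-1)/2}}$ by interpolating between an $L^2$ bound and an $L^\infty$ bound, exploiting the sharp decay of $(d\sigma)^\vee$ recorded in Corollary \ref{decay}. First I would record that $E\ast\widehat{K}=\widehat{\,\widehat{E}\,K\,}$ up to the usual measure-normalization constants, so that, by the Hausdorff--Young-type inequality on $(\mathbb F_q^d,dx)$ versus $(\mathbb F_q^d,dm)$ together with Plancherel, one gets an $L^2(dx)$ bound of the shape
$$
\|E\ast\widehat{K}\|_{L^2(\mathbb F_q^d,dx)}^2=\sum_{m\in\mathbb F_q^d}|\widehat{E}(m)|^2|K(m)|^2.
$$
Splitting the sum over $m$ into the three regimes of (\ref{evendecay}) --- namely $m=0$ (where $K=0$), the nonzero $m$ on the cone $\sum m_j^2/a_j=0$ (where $|K(m)|\sim q^{-(d-2)/2}$), and the $m$ off the cone (where $|K(m)|\sim q^{-d/2}$) --- and using $\sum_m|\widehat{E}(m)|^2=q^{-d}|E|$ on each piece gives $\|E\ast\widehat{K}\|_{L^2}^2\lesssim q^{-(d-2)}\,q^{-d}|E|$, i.e. $\|E\ast\widehat{K}\|_{L^2}\lesssim q^{-(d-1)}|E|^{1/2}$. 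For the $L^\infty$ estimate I would use the pointwise bound $|E\ast\widehat{K}(x)|\le \|\widehat{K}\|_{L^\infty(dx)}\cdot|E|\cdot q^{-d}$ together with $\|\widehat K\|_{L^\infty(dx)}=\|K\|_{L^1(dm)}$, which by (\ref{evendecay}) is $\lesssim q^{d-1}\cdot q^{-(d-2)/2}+q^d\cdot q^{-d/2}\sim q^{d/2}$; this yields $\|E\ast\widehat{K}\|_{L^\infty}\lesssim q^{-d/2}|E|$.

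Interpolating these two endpoints by the standard convexity (log-convexity of $L^p$ norms) inequality gives, for $2\le p\le\infty$,
$$
\|E\ast\widehat{K}\|_{L^p}\lesssim \|E\ast\widehat{K}\|_{L^2}^{2/p}\,\|E\ast\widehat{K}\|_{L^\infty}^{1-2/p}
\lesssim \bigl(q^{-(d-1)}|E|^{1/2}\bigr)^{2/p}\bigl(q^{-d/2}|E|\bigr)^{1-2/p},
$$
and specializing $p=(d-1)/2$ produces a bound of the form $q^{\alpha}|E|^{\beta}$ with $\beta=(d-3)/(d-1)$ (so that the middle exponent in (\ref{six}), $|E|^1$, is \emph{not} obtained this way). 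To get the full three-regime statement I would combine this interpolation bound with two more elementary bounds: a trivial $L^1$-type bound $\|E\ast\widehat K\|_{L^{(d-1)/2}}\le \|\widehat K\|_{L^{(d-1)/2}(dx)}\cdot q^{-d}|E|$ valid for all $E$ (linear in $|E|$), and a refinement for small $|E|$ coming from restricting the Fourier support. The three cases in (\ref{six}) then arise as the pointwise minimum of these competing estimates: the linear-in-$|E|$ bound wins in the middle range $q^{(d-2)/2}\le|E|\le q^{d/2}$, while the interpolation/convexity bound (with exponent $(d-3)/(d-1)$) wins for $|E|\le q^{(d-2)/2}$ and, after using Plancherel in the dual direction (equivalently, estimating $\|\widehat E\|_{L^\infty}\le q^{-d}|E|$ crudely for large sets), again for $q^{d/2}\le|E|\le q^d$. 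Matching the constants $q^{(-d^2+2d-3)/(d-1)}$, $q^{(-d^2+d-1)/(d-1)}$, $q^{-d+1}$ at the breakpoints $|E|=q^{(d-2)/2}$ and $|E|=q^{d/2}$ is then a bookkeeping check on the exponents.

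For the $d=4$ case in (\ref{four}) the same scheme applies with $p=6$, but $p=6>2$ exactly as above; here I expect one extra subtlety: with $d=4$ the decay exponent $(d-2)/2=1$ is small relative to $(d-1)/2=3/2$, so the naive interpolation is lossy and one must instead use the explicit structure of $\widehat K$ — in particular a direct computation of $\|E\ast\widehat K\|_{L^6}$ via expanding the sixth power and counting solutions of the associated system $Q(y_1)=\cdots=Q(y_3)=0$ with $y_1-y_2+y_3\in$ (a translate of) itself — or, more economically, bootstrap from the $d\ge 6$ argument by treating $\mathbb F_q^4$ with an auxiliary splitting. \textbf{The main obstacle} I anticipate is precisely obtaining the middle regime bound that is \emph{linear} in $|E|$ with the sharp constant $q^{(-d^2+d-1)/(d-1)}$: this does not follow from $L^2$--$L^\infty$ interpolation (which only ever gives exponents $\le 1$ on $|E|$ with the wrong constant in that range), and it seems to require either a dual $L^\infty(dm)$ input on $\widehat E$ restricted to the cone (i.e. a level-set/incidence argument counting how $E$ distributes over cosets of the subspace $H\subset S$), or the observation that the cone $\{m:\sum m_j^2/a_j=0\}$ itself contains a $d/2$-dimensional subspace, forcing a loss that is recovered only by a more careful $L^p$ bound on $K$ restricted to that cone. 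Closing this gap — essentially the whole content beyond \cite{KS11}'s restricted-type result — is where the real work lies.
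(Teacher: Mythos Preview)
Your outline has the right architecture (interpolate an $L^\infty$ bound against an $L^2$ bound), but both endpoint estimates you propose are too weak, and the missing ingredient is concrete and simple rather than the incidence/subspace speculation you describe.

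First, your $L^\infty$ bound is lossy. You estimate $\|\widehat{K}\|_{L^\infty(dx)}\le\|K\|_{L^1(dm)}\sim q^{d/2}$, but in fact $\widehat{K}=d\sigma-1$ explicitly, so $\|\widehat{K}\|_{L^\infty(dx)}\sim q$ (since $d\sigma(x)=q^d|S|^{-1}S(x)\sim q\cdot S(x)$). This gives the much stronger endpoint
\[
\|E\ast\widehat{K}\|_{L^\infty(\mathbb F_q^d,dx)}\lesssim q^{-d+1}|E|,
\]
which is what the paper uses. With your bound $q^{-d/2}|E|$ the interpolation cannot reach the exponents in (\ref{six}).

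Second, and this is the main gap, your $L^2$ bound uses only Plancherel on the on-cone piece and therefore gives a single estimate $q^{-d+1}|E|^{1/2}$. The paper instead obtains a \emph{three-regime} $L^2$ bound by combining Plancherel with one additional estimate on the on-cone sum: writing $S_a=\{m:\sum m_j^2/a_j=0\}$, one expands
\[
\sum_{m\in S_a}|\widehat{E}(m)|^2=q^{-2d}\sum_{x,y\in E}\sum_{m\in S_a}\chi(-m\cdot(x-y))
\]
and applies the Fourier decay (\ref{ko}) of the dual cone $S_a$ to the inner sum for $x\neq y$. This yields
\[
\mbox{I}\lesssim q^{-2d+1}|E|+q^{\frac{-5d+4}{2}}|E|^2,
\]
and taking the minimum with the Plancherel bound $\mbox{I}\le q^{-2d+2}|E|$ produces exactly the three regimes in (\ref{sami}). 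The $|E|^2$ term here is precisely the source of the linear-in-$|E|$ middle regime that worried you; no coset counting or subspace structure of $S$ is invoked.

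With the correct (\ref{samil}) and (\ref{sami}) in hand, straight $L^2$--$L^\infty$ interpolation (with $\theta=4/(d-1)$ for (\ref{six}) and $\theta=1/3$ for (\ref{four})) gives both statements directly. In particular, the $d=4$ case requires no sixth-moment expansion or auxiliary splitting; it follows from the same two endpoints.
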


\begin{proof} Using the interpolation theorem, it suffices to prove that  the following two inequalities hold for all d$\geq 4$ even:

\begin{equation}\label{samil}
\|E\ast \widehat{K}\|_{L^{\infty}(\mathbb F_q^d,dx)} \lesssim  q^{-d+1} |E|
\end{equation}
and 
\begin{equation}\label{sami}\|E\ast \widehat{K}\|_{L^{2}(\mathbb F_q^d,dx)}\lesssim \left\{\begin{array}{ll} 
q^{\frac{-2d+1}{2}} |E|^{\frac{1}{2}}\quad &\mbox{if} ~~ 1\leq |E|\leq q^{\frac{d-2}{2}}\\
q^{\frac{-5d+4}{4}}|E|\quad &\mbox{if} ~~q^{\frac{d-2}{2}}\leq|E|\leq q^{\frac{d}{2}}\\
q^{-d+1}|E|^{\frac{1}{2}} \quad &\mbox{if} ~~ q^{\frac{d}{2}}\leq |E|\leq q^d.\end{array}\right.\end{equation}
 
The estimate (\ref{samil}) can be obtained by applying  Young's inequality. In fact, we see that
$$\|E\ast \widehat{K}\|_{L^{\infty}(\mathbb F_q^d,dx)} \leq \|\widehat{K}\|_{L^\infty(\mathbb F_q^d, dx)} \|E\|_{L^1(\mathbb F_q^d,dx)}.$$
Since $\|\widehat{K}\|_{L^\infty(\mathbb F_q^d, dx)}\lesssim q$ and $\|E\|_{L^1(\mathbb F_q^d,dx)}=q^{-d}|E|,$ the inequality (\ref{samil}) is established. To prove the inequality (\ref{sami}), first use the Plancherel theorem. It follows that
$$\|E\ast \widehat{K}\|^2_{L^{2}(\mathbb F_q^d,dx)}= \|\widehat{E} K\|^2_{L^{2}(\mathbb F_q^d,dm)}.$$
Now, we recall that $dx$ is the normalized counting measure but $dm$ is the counting measure.
Thus,  the expression above is given by
\begin{align*}\sum_{m\in \mathbb F_q^d}|\widehat{E}(m)|^2|K(m)|^2 =&\sum_{m\neq (0,\dots,0)} |\widehat{E}(m)|^2|(d\sigma)^\vee(m)|^2\\
&\sim \frac{1}{q^{d-2}} \sum_{\substack{m\neq (0,\dots,0):\\ \frac{m_1^2}{a_1}+\cdots+ \frac{m_d^2}{a_d}=0}} |\widehat{E}(m)|^2 +\frac{1}{q^d}\sum_{\substack{m\neq (0,\dots,0):\\ \frac{m_1^2}{a_1}+\cdots+ \frac{m_d^2}{a_d}\neq 0}} |\widehat{E}(m)|^2 =\mbox{I} +\mbox{II},\end{align*}
where the first line and the second line follow from the definition of $K$ and the inequality (\ref{evendecay}) in Corollary \ref{decay}, respectively.
Applying the Plancherel theorem, it is clear that 
\begin{equation}\label{EII} \mbox{II} \leq \frac{1}{q^d}\sum_{m \in \mathbb F_q^d} |\widehat{E}(m)|^2 = q^{-2d}|E|.\end{equation}

In order to obtain a good upper bound of $\mbox{I}$, we shall conduct two different estimates on $\mbox{I}.$
First,  the Plancherel theorem yields 
\begin{equation}\label{EI1} \mbox{I} \leq \frac{1}{q^{d-2}} \sum_{m\in \mathbb F_q^d} |\widehat{E}(m)|^2 = \frac{|E|}{q^{2d-2}} .\end{equation}
On the other hand, it follows that
$$\mbox{I}\leq \frac{1}{q^{d-2}} \sum_{ \frac{m_1^2}{a_1}+\cdots+ \frac{m_d^2}{a_d}=0} |\widehat{E}(m)|^2
=\frac{1}{q^{3d-2}} \sum_{ \frac{m_1^2}{a_1}+\cdots+ \frac{m_d^2}{a_d}=0} \sum_{x,y\in E} \chi(-m\cdot (x-y)).$$
Now, let $S_a=\{m\in \mathbb F_q^d: \frac{m_1^2}{a_1}+\cdots+ \frac{m_d^2}{a_d}=0\}$ which is also a nondegenerate quadratic surface with $|S_a|\sim q^{d-1}.$ Then the expression above can be written by
$$ \frac{1}{q^{3d-2}} \sum_{x,y\in E:x=y} |S_a| + \frac{1}{q^{3d-2}} \sum_{x,y\in E:x\neq y} \left(\sum_{m\in S_a} \chi(-m\cdot (x-y)) \right). $$
Now, we see from (\ref{ko}) that if $x\neq y$, then $\left|\sum_{m\in S_a} \chi(-m\cdot (x-y)) \right|\lesssim q^{\frac{d}{2}}.$
Thus, we obtain that
$$ \mbox{I}\lesssim q^{-2d+1}|E| + q^{\frac{-5d+4}{2}} |E|^2.$$
Combining this with the inequality (\ref{EI1}) gives 
$$ \mbox{I} \lesssim \min \left( \frac{|E|}{q^{2d-2}},~ q^{-2d+1}|E| + q^{\frac{-5d+4}{2}} |E|^2 \right).$$
In conjunction with  the inequality (\ref{EII}), this shows that
$$ \|E\ast \widehat{K}\|^2_{L^{2}(\mathbb F_q^d,dx)} \lesssim  \min \left( \frac{|E|}{q^{2d-2}} ,~ q^{-2d+1}|E| + q^{\frac{-5d+4}{2}} |E|^2 \right)~+ q^{-2d}|E|.$$ 
Since $ (\alpha+\beta)^{1/2}\sim \alpha^{1/2} + \beta^{1/2} $ for $\alpha, \beta \geq 0$,  it also follows that
$$\|E\ast \widehat{K}\|_{L^{2}(\mathbb F_q^d,dx)} \lesssim  \min \left( q^{-d+1} |E|^{\frac{1}{2}},~ q^{\frac{-2d+1}{2}}|E|^{\frac{1}{2}} + q^{\frac{-5d+4}{4}} |E| \right)~+ q^{-d}|E|^{\frac{1}{2}}.$$ 
A direct computation shows that this implies  the inequality (\ref{sami}). We complete the proof of Lemma \ref{mainlem}.

\end{proof}
 
 \section{Proof of Theorem \ref{mainthm} for $d\geq 6$}
In this section we provide the complete proof of Theorem \ref{mainthm} in the case that $d\geq 6$ is even.
The proof for $d=4$ shall be independently given in the following section. The main reason is as follows.
Lemma \ref{mainlem} shall be used to prove Theorem \ref{mainthm}. 
If $d\geq 6$ is even, then we have seen that  the inequality (\ref{six}) of Lemma \ref{mainlem} follows by interpolating (\ref{samil}) and (\ref{sami}).
However, if $d$ is four, then such an interpolation is too meaningless to assert that  (\ref{six}) holds for $d=4.$
As an alternative approach, the inequality (\ref{four}) of Lemma \ref{mainlem} shall be applied to complete the proof for $d=4.$
In this case we need more delicate estimates.\\

Now we start proving Theorem \ref{mainthm} for $d\geq 6$ even.
As mentioned in Remark \ref{remgo}, it is enough to prove the following statement.
\begin{theorem}\label{res1}  Let $S$ be the variety in $\mathbb F_q^d$ as defined in (\ref{defv}). If $d\geq 6$ is even, then we have
$$ \|f\ast d\sigma\|_{L^r(\mathbb F_q^d, dx)} \lesssim \|f\|_{L^p(\mathbb F_q^d, dx)}~~\mbox{for all functions}~~f ~~\mbox{on}~~\mathbb F_q^d,$$
where $ (p,r)=\left(\frac{d^2-d}{d^2-2d+2}, d-1\right).$\end{theorem}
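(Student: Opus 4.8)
The plan is to prove Theorem \ref{res1} by decomposing the operator $f\ast d\sigma$ according to the splitting $d\sigma = \widehat{K} + 1$ established above. Writing $f\ast d\sigma = f\ast \widehat{K} + f\ast 1$, the term $f\ast 1$ is the constant function whose value is the average of $f$; since $dx$ is normalized, this term is trivially controlled: $\|f\ast 1\|_{L^r} = \|f\|_{L^1} \lesssim \|f\|_{L^p}$ for $p\geq 1$, because on a probability space $L^p$ norms increase with $p$. So the entire content is the estimate for the Bochner--Riesz piece $f\ast\widehat{K}$, and it suffices to prove $\|f\ast\widehat{K}\|_{L^{d-1}(\mathbb F_q^d,dx)} \lesssim \|f\|_{L^{p}(\mathbb F_q^d,dx)}$ with $p = (d^2-d)/(d^2-2d+2)$.

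The key tool is Lemma \ref{mainlem}, which gives three-regime bounds for $\|E\ast\widehat{K}\|_{L^{(d-1)/2}}$ when $E$ is a characteristic function. To pass from characteristic functions to general $f$, I would use a standard dyadic pigeonholing / restricted-strong-type argument: by splitting $f$ into real and imaginary, positive and negative parts, and normalizing, reduce to $f\geq 0$; then decompose $f = \sum_{j} 2^j \chi_{E_j}$ where $E_j = \{x : 2^j \le f(x) < 2^{j+1}\}$ (dyadic level sets), so that $\|f\|_{L^p}^p \sim \sum_j 2^{jp}|E_j| q^{-d}$. Applying the triangle inequality in $L^{d-1}$ — not $L^{(d-1)/2}$ — requires first converting Lemma \ref{mainlem}'s $L^{(d-1)/2}$ bound on $E\ast\widehat{K}$ into an $L^{d-1}$ statement; this is done by interpolating the $L^{(d-1)/2}$ bound with the trivial $L^\infty$ bound $\|E\ast\widehat{K}\|_\infty \lesssim q^{-d+1}|E|$ from (\ref{samil}). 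That interpolation yields, for each of the three size regimes of $|E|$, a bound of the shape $\|E\ast\widehat{K}\|_{L^{d-1}} \lesssim q^{\alpha}|E|^{\beta}$; one then records these three exponent pairs.

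With an $L^{d-1}$ restricted bound $\|E\ast\widehat{K}\|_{L^{d-1}} \lesssim \Phi(|E|)$ in hand (piecewise in $|E|$), the main step is to sum over dyadic pieces: $\|f\ast\widehat{K}\|_{L^{d-1}} \le \sum_j 2^j \|E_j \ast \widehat{K}\|_{L^{d-1}} \le \sum_j 2^j \Phi(|E_j|)$, and then to show this is $\lesssim (\sum_j 2^{jp}|E_j|)^{1/p} q^{-d/p}$ by Hölder's inequality in the sum over $j$, carefully tracking which regime each $|E_j|$ falls into and checking that the critical exponent $p = (d^2-d)/(d^2-2d+2)$ is exactly the one that makes the geometric series over $j$ sum. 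The fact that $(1/p, 1/r) = ((d^2-2d+2)/(d(d-1)), 1/(d-1))$ is a vertex of the conjectured region is what makes this borderline — one expects the summation to be barely convergent (or to need an extra logarithm-killing argument such as summing the three regimes separately with strict exponent gaps).

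The main obstacle I anticipate is precisely this borderline summation: at a vertex of the sharp region, the naive dyadic sum $\sum_j 2^j\Phi(|E_j|)$ tends to produce a logarithmic loss (this is exactly the $\log$ that \cite{KS11} could not remove), so the argument must exploit the piecewise structure of $\Phi$ — the fact that the exponent $\beta$ on $|E|$ is strictly below $1$ in the small and large regimes and equals $1$ only in the middle regime — to gain a genuine power saving off the diagonal and absorb the boundary case. I would handle this by splitting the index set $\{j\}$ into three classes according to the regime of $|E_j|$, bounding each class's contribution by Hölder with the appropriate exponent, and verifying that in each class the resulting geometric series in $j$ has a nonzero ratio exponent, so no logarithm appears. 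The needed inequality $\|f\ast d\sigma\|_{L^{d-1}} \lesssim \|f\|_{L^p}$ then follows by combining the three class estimates with the trivial bound for $f\ast 1$.
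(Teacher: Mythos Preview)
Your reduction to the Bochner--Riesz piece and your handling of $f\ast 1$ match the paper, and the level-set decomposition is the right framework. The gap is in the summation step. You propose to interpolate the $L^{(d-1)/2}$ bound of Lemma \ref{mainlem} with the $L^\infty$ bound (\ref{samil}) to get an $L^{d-1}$ bound of the form $\|E\ast\widehat K\|_{L^{d-1}}\lesssim q^{\alpha}|E|^{\gamma}$ and then sum linearly: $\|f\ast\widehat K\|_{L^{d-1}}\le\sum_j 2^{-j}\|E_j\ast\widehat K\|_{L^{d-1}}$. A direct computation gives $\gamma=(d-2)/(d-1)$ in the small and large regimes and $\gamma=1$ in the middle regime. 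With the normalization $\sum_j 2^{-jp}|E_j|=1$ (so $|E_j|\le 2^{jp}$), the inner sum becomes a geometric series in $j$ with ratio exponent $-1+p\gamma$ (after possibly trading a power of $|E_j|$ for a power of $q$ using the regime constraints). In the small regime this exponent is strictly negative and the $q$-power is exactly $-d/p$, so that case closes. But in the middle and large regimes the two requirements collide: to make the $q$-power come out to $-d/p$ you must choose the trade-off parameter so that $-1+p\gamma=0$ exactly, and then the geometric series is $\sum_j 1\sim\log q$. Any strictly convergent choice leaves a positive $q$-power. So the ``nonzero ratio exponent'' you assert does not materialize; this is precisely the logarithm \cite{KS11} could not remove, and a single-sum triangle inequality cannot remove it either.

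The paper's device is bilinear: write $\|f\ast\widehat K\|_{L^{d-1}}^2=\|(f\ast\widehat K)^2\|_{L^{(d-1)/2}}$, expand as a double sum $\sum_{k}\sum_{j\ge k}2^{-k-j}\|(E_k\ast\widehat K)(E_j\ast\widehat K)\|_{L^{(d-1)/2}}$, and apply the $L^\infty$ bound (\ref{samil}) to the $E_k$-factor and Lemma \ref{mainlem} directly (at exponent $(d-1)/2$, not $d-1$) to the $E_j$-factor. The inner sum over $j\ge k$ is then a geometric series that converges to $\sim 2^{k(-1+p\beta)}$, and this residual power of $2^k$ is absorbed by the outer sum via the full normalization $\sum_k 2^{-pk}|E_k|=1$, which is strictly stronger information than the pointwise bound $|E_k|\le 2^{pk}$ you use. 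That extra factor from the outer sum is exactly what kills the logarithm. Your plan is missing this squaring/double-sum mechanism (or an equivalent substitute), and without it the endpoint estimate does not close.
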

\begin{proof} Let $p=\frac{d^2-d}{d^2-2d+2}$ and $ r=d-1.$ 
We aim to prove that for every complex-valued function $f$ on $\mathbb F_q^d,$
$$\|f\ast d\sigma\|_{L^r(\mathbb F_q^d, dx)} \lesssim \|f\|_{L^p(\mathbb F_q^d, dx)} = \left(q^{-d} \sum_{x\in \mathbb F_q^d} |f(x)|^p\right)^{\frac{1}{p}}.$$
As in \cite{LL10} we proceed with the proof by decomposing the function $f$ to which the operator is applied into level sets.
Without loss of generality, we may assume that $f$ is a nonnegative real-valued function and 
$$ \sum_{x\in \mathbb F_q^d} |f(x)|^p =1.$$
Therefore, we may also assume that
\begin{equation}\label{simp} f= \sum_{k=0}^\infty 2^{-k} E_k ,\end{equation}
where $E_0, E_1, \dots$ are disjoint subsets of $\mathbb F_q^d.$ 
It follows from these assumptions that  
\begin{equation}\label{as1}\sum_{j=0}^\infty 2^{-pj}|E_j|=1,\end{equation}
and hence  for every $j=0,1,\dots,$
\begin{equation}\label{as2} |E_j|\leq 2^{pj}.\end{equation}
Recall that $d\sigma= \widehat{K} +1$ where $K$ is the  Bochner-Riesz kernel.
It follows that 
$$\|f\ast d\sigma\|_{L^r(\mathbb F_q^d, dx)} \leq \|f\ast \widehat{K}\|_{L^r(\mathbb F_q^d, dx)} + \|f\ast 1\|_{L^r(\mathbb F_q^d, dx)}.$$
Since $r>p$ and $dx$ is the normalized counting measure on $\mathbb F_q^d,$  it is clear from  Young's inequality that 
$$\|f\ast 1\|_{L^r(\mathbb F_q^d, dx)} \leq \|f\|_{L^p(\mathbb F_q^d, dx)}.$$
Therefore, it suffices to prove the following inequality
$$ \|f\ast \widehat{K}\|_{L^r(\mathbb F_q^d, dx)} \lesssim \|f\|_{L^p(\mathbb F_q^d, dx)}.$$
Since we have assumed that $\sum_{x\in \mathbb F_q^d} |f(x)|^p =1$, we see that $\|f\|_{L^p(\mathbb F_q^d, dx)}=q^{-d/p}.$
Also observe that 
$$ \|f\ast \widehat{K}\|^2_{L^r(\mathbb F_q^d, dx)} =\|(f\ast \widehat{K})(f\ast \widehat{K})\|_{L^{\frac{r}{2}}(\mathbb F_q^d, dx)}.$$
From these observations, our task is to show that
\begin{equation}\label{same} q^{\frac{2d}{p}} \|(f\ast \widehat{K})(f\ast \widehat{K})\|_{L^{\frac{r}{2}}(\mathbb F_q^d, dx)} \lesssim 1.\end{equation}
Using  (\ref{simp}), we see that
\begin{align*} q^{\frac{2d}{p}}\|(f\ast \widehat{K})(f\ast \widehat{K})\|_{L^{\frac{r}{2}}(\mathbb F_q^d, dx)} 
&\leq q^{\frac{2d}{p}}\sum_{k=0}^\infty\sum_{j=0}^\infty 2^{-k-j} \|(E_k\ast \widehat{K})(E_j\ast \widehat{K})\|_{L^{\frac{r}{2}}(\mathbb F_q^d, dx)}\\
&\lesssim  q^{\frac{2d}{p}} q^{-d+1} \sum_{k=0}^\infty\sum_{j=k}^\infty 2^{-k-j}|E_k|  \|(E_j\ast \widehat{K})\|_{L^{\frac{r}{2}}(\mathbb F_q^d, dx)},\end{align*}
where the last line follows by the symmetry of $k$ and $j,$ and the inequality (\ref{samil}). Now, for each $j=0,1,2,\dots,$
we consider the following three sets:
$$J_1=\{j: 1\leq |E_j|\leq q^{\frac{d-2}{2}}\}, $$
$$J_2=\{j: q^{\frac{d-2}{2}}< |E_j|\leq q^\frac{d}{2}\}, $$
and
$$J_3=\{j: q^{\frac{d}{2}}< |E_j|\leq q^d\}.$$
Since $r/2=(d-1)/2,$ it is clear from  (\ref{six}) in Lemma \ref{mainlem} that our goal is to prove the following three inequalities:
\begin{equation}\label{G1} A_1:= q^{\frac{2d}{p}} q^{-d+1} q^{\frac{-d^2+2d-3}{d-1}} \sum_{k=0}^\infty\sum_{j=k: j\in J_1}^\infty 2^{-k-j}|E_k||E_j|^{\frac{d-3}{d-1}} \lesssim 1,\end{equation}
\begin{equation}\label{G2} A_2:= q^{\frac{2d}{p}} q^{-d+1}q^{\frac{-d^2+d-1}{d-1}}\sum_{k=0}^\infty\sum_{j=k: j\in J_2}^\infty 2^{-k-j}|E_k||E_j|\lesssim 1,\end{equation}
\begin{equation}\label{G3} A_3:= q^{\frac{2d}{p}} q^{-d+1}q^{-d+1}\sum_{k=0}^\infty\sum_{j=k: j\in J_3}^\infty 2^{-k-j}|E_k||E_j|^{\frac{d-3}{d-1}}\lesssim 1.\end{equation}
First, we prove that the inequality (\ref{G1}) holds. Since $p=\frac{d^2-d}{d^2-2d+2},$ a direct computation shows that $q^{\frac{2d}{p}} q^{-d+1} q^{\frac{-d^2+2d-3}{d-1}}=1.$ Now recall from (\ref{as2}) that $|E_j|\leq 2^{pj}$ for all $j=0,1,\dots.$ Therefore, it follows that
$$ A_1\leq \sum_{k=0}^\infty\sum_{j=k: j\in J_1}^\infty 2^{-k-j}|E_k|2^{\frac{jp(d-3)}{d-1}} 
\leq \sum_{k=0}^\infty2^{-k}|E_k|\sum_{j=k}^\infty 2^{j\left(-1+\frac{p(d-3)}{d-1}\right)}.$$
Since $-1+\frac{p(d-3)}{d-1}= \frac{-d-2}{d^2-2d+2} <0$ and the sum over $j$ is a geometric series, we see that $\sum_{j=k}^\infty 2^{j\left(-1+\frac{p(d-3)}{d-1}\right)} \sim 2^{k\left(-1+\frac{p(d-3)}{d-1}\right)}.$ Thus, the inequality (\ref{G1}) is established as follows: 
$$ A_1\lesssim \sum_{k=0}^\infty |E_k|2^{k\left(-2+\frac{p(d-3)}{d-1}\right)} \leq \sum_{k=0}^\infty |E_k|2^{-pk} =1, $$
where we used the simple observation that  $-2+\frac{p(d-3)}{d-1}< -p,$ and then the assumption $(\ref{as1}).$ \\
Second, we prove that the inequality (\ref{G2}) holds. Let $\varepsilon =\frac{2d-4}{d^2-d}.$ Since $d\geq 6,$ we see that $0<\varepsilon <1.$ Write $A_2$ as follows:
$$ A_2= q^{\frac{2d}{p}} q^{-d+1}q^{\frac{-d^2+d-1}{d-1}}\sum_{k=0}^\infty\sum_{j=k: j\in J_2}^\infty 2^{-k-j}|E_k||E_j|^{1-\varepsilon} |E_j|^\varepsilon.$$
Since $0<\varepsilon <1,$ we notice from (\ref{as2}) that $|E_j|^{1-\varepsilon}\leq 2^{p(1-\varepsilon)j}.$ By the definition of the set $J_2$, we also see that 
$|E_j|^\varepsilon \leq q^\frac{d\varepsilon}{2}$ for all $j\in J_2.$  Then, we have
\begin{align*} A_2&\leq q^{\frac{2d}{p}} q^{-d+1}q^{\frac{-d^2+d-1}{d-1}} q^\frac{d\varepsilon}{2}\sum_{k=0}^\infty\sum_{j=k: j\in J_2}^\infty 2^{-k-j}|E_k|2^{p(1-\varepsilon)j}\\
&\leq  q^{\frac{2d}{p}} q^{-d+1}q^{\frac{-d^2+d-1}{d-1}}q^\frac{d\varepsilon}{2} \sum_{k=0}^\infty 2^{-k}|E_k|\sum_{j=k}^\infty 2^{j\left(-1+p(1-\varepsilon)\right)}.\end{align*}
Notice that $q^{\frac{2d}{p}} q^{-d+1}q^{\frac{-d^2+d-1}{d-1}}q^\frac{d\varepsilon}{2}=1,$ and the geometric series over $j$ converges to $\sim 2^{k\left(-1+p(1-\varepsilon)\right)}$ because  $ -1+p(1-\varepsilon) = \frac{-d+2}{d^2-2d+2}<0$ for $d\geq 6.$ From this observation and (\ref{as1}), the inequality (\ref{G2}) follows because we have
$$ A_2\lesssim \sum_{k=0}^\infty |E_k| 2^{k\left(-2+p(1-\varepsilon)\right)}= \sum_{k=0}^\infty |E_k| 2^{-pk} =1.$$
Finally, we show that the inequality (\ref{G3}) holds. As in the proof of the inequality (\ref{G2}),
we  let $0<\delta=\frac{4}{d^2-d}<1$ for $d\geq 6.$ The value $A_3$ is written by
$$ A_3=q^{\frac{2d}{p}} q^{-d+1}q^{-d+1}\sum_{k=0}^\infty\sum_{j=k: j\in J_3}^\infty 2^{-k-j}|E_k||E_j|^{\delta+\frac{d-3}{d-1}}|E_j|^{-\delta}.$$
Notice  from (\ref{as2}) that $ |E_j|^{\delta+\frac{d-3}{d-1}} \leq 2^{p\left(\delta+\frac{d-3}{d-1}\right)j}$ for all $j=0,1,2,\dots.$
By the definition of  $J_3$, it is easy to notice that $|E_j|^{-\delta} \leq q^{\frac{-d\delta}{2}}$ for $j\in J_3.$ It therefore follows that

\begin{align*} A_3&\leq q^{\frac{2d}{p}} q^{-d+1}q^{-d+1}q^{\frac{-d\delta}{2}}\sum_{k=0}^\infty\sum_{j=k: j\in J_3}^\infty 2^{-k-j}|E_k|2^{p\left(\delta+\frac{d-3}{d-1}\right)j}\\
&\leq \sum_{k=0}^\infty 2^{-k}|E_k| \sum_{j=k}^\infty 2^{\left(-1+p\left(\delta+\frac{d-3}{d-1}\right)\right)j}\\
&\sim \sum_{k=0}^\infty|E_k|2^{\left(-2+p\left(\delta+\frac{d-3}{d-1}\right)\right)k} = \sum_{k=0}^\infty|E_k| 2^{-pk}=1,
\end{align*}
where we used the facts that $q^{\frac{2d}{p}} q^{-d+1}q^{-d+1}q^{\frac{-d\delta}{2}}=1,$ $\left(-1+p\left(\delta+\frac{d-3}{d-1}\right)\right)=\frac{-d+2}{d^2-2d+2}<0$ for $d\geq 6,$ and $\left(-2+p\left(\delta+\frac{d-3}{d-1}\right)\right)=-p,$ and then the assumption (\ref{as1}) for the last equality. Thus, the inequality (\ref{G3}) holds and the proof of Theorem \ref{res1} is complete.\end{proof}

 \section{Proof of Theorem \ref{mainthm} for $d=4$}

As observed in Remark \ref{remgo}, it amounts to showing the following statement.
\begin{theorem}\label{res2}  Let $S$ be the variety in $\mathbb F_q^4$ as defined  in (\ref{defv}). Then, we have
$$ \|f\ast d\sigma\|_{L^3(\mathbb F_q^4, dx)} \lesssim \|f\|_{L^{\frac{6}{5}}(\mathbb F_q^4, dx)}~~\mbox{for all functions}~~f ~~\mbox{on}~~\mathbb F_q^4.$$
\end{theorem}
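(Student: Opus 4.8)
The plan is to run the same machine as in the proof of Theorem~\ref{res1}, now feeding in the $d=4$ estimate \eqref{four} in place of \eqref{six}. By Remark~\ref{remgo} and the decomposition $d\sigma=\widehat{K}+1$, together with Young's inequality (which kills the $\ast 1$ term because $3>6/5$ and $dx$ is the normalized counting measure), it suffices to prove $\|f\ast\widehat{K}\|_{L^{3}(\mathbb F_q^4,dx)}\lesssim\|f\|_{L^{6/5}(\mathbb F_q^4,dx)}$. As in Section~3 I would normalize $\sum_{x}|f(x)|^{6/5}=1$, so that $\|f\|_{L^{6/5}}=q^{-10/3}$, write $f=\sum_{k\ge0}2^{-k}E_k$ with the $E_k$ disjoint, and record $\sum_{k}2^{-6k/5}|E_k|=1$ and $|E_k|\le 2^{6k/5}$. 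Squaring and expanding,
$$\|f\ast\widehat{K}\|_{L^{3}}^{2}=\big\|(f\ast\widehat{K})(f\ast\widehat{K})\big\|_{L^{3/2}}\le \sum_{k,j}2^{-k-j}\big\|(E_k\ast\widehat{K})(E_j\ast\widehat{K})\big\|_{L^{3/2}},$$
so the goal is to show that $q^{20/3}$ times the last sum is $\lesssim 1$.

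The point at which Section~3 does not transfer is that $3/2<2$, so one cannot reach $L^{3/2}$ for $E_j\ast\widehat{K}$ by interpolating \eqref{samil} and \eqref{sami}; this is precisely where \eqref{four} is needed. Instead I would apply H\"older to each bilinear term with the exponents $2$ and $6$ (note $\tfrac{2}{3}=\tfrac12+\tfrac16$). By the symmetry of $k$ and $j$ it is enough to treat $k\le j$, and there I would place the $L^{6}$ norm on the \emph{smaller} index and the $L^2$ norm on the \emph{larger} one,
$$\big\|(E_k\ast\widehat{K})(E_j\ast\widehat{K})\big\|_{L^{3/2}}\le \|E_k\ast\widehat{K}\|_{L^{6}}\,\|E_j\ast\widehat{K}\|_{L^{2}}\qquad(k\le j),$$
and bound the two factors by \eqref{four} and by the $d=4$ instance of \eqref{sami}, respectively. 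It is essential not to decouple the two sums (i.e.\ not to replace $\sum_{k\le j}$ by $\big(\sum_k\big)\big(\sum_j\big)$): the inner sum over $j\ge k$ of $2^{-j}\|E_j\ast\widehat{K}\|_{L^{2}}$ is genuinely geometric, but the analogous sum with $\|E_j\ast\widehat{K}\|_{L^{6}}$ is only logarithmically summable in the regime $1\le|E|\le q$, so factoring would reintroduce exactly the logarithm this paper is meant to remove.

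What remains is the summation. Splitting both $k$ and $j$ according to which of $1\le|E|\le q$, $q<|E|\le q^{2}$, $q^{2}<|E|\le q^{4}$ the associated level set falls into leaves a bounded number of double sums. In each, I would bound $|E_j|\le\min(q^{m},2^{6j/5})$ for the relevant $m\in\{1,2,4\}$ and split the inner $j$-sum at the crossover $2^{6j/5}\sim q^{m}$; this produces a geometric series whose value decays in $k$ (or is, at worst, a constant), and feeding it into the outer $k$-sum — again with $|E_k|\le\min(q^{m'},2^{6k/5})$ and its crossover — yields another geometric series. I expect the only genuinely tight case to be the ``middle/middle'' one, $q<|E_k|,|E_j|\le q^{2}$, where both \eqref{four} and \eqref{sami} are merely linear in $|E|$: there the geometric sum $\sum 2^{-k}\min(q^{2},2^{6k/5})$ is dominated by its value $q^{1/3}$ at the crossover $2^{6k/5}=q^{2}$, and multiplying $q^{-10/3}\cdot q^{1/3}=q^{-3}$ by $q^{-4}\cdot q^{1/3}=q^{-11/3}$ gives $q^{-20/3}$ on the nose, with no logarithmic factor. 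Thus the two real obstacles are this ``$\min$ plus crossover'' bookkeeping in the middle regime and the discipline of never factoring the two sums in the low regime; all the other cases come out strictly subcritical, exactly as in Section~3.
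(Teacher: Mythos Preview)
Your proposal is correct and follows essentially the same route as the paper: the H\"older split $\tfrac23=\tfrac16+\tfrac12$ with the $L^6$ norm on the smaller index $k$ and the $L^2$ norm on the larger index $j$, the nine-fold case split by the sizes of $|E_k|$ and $|E_j|$, and the coupled summation over $j\ge k$ via geometric series are exactly what the paper does. Your ``$\min$ plus crossover'' bookkeeping is just a repackaging of the paper's direct use of the regime constraints (e.g.\ $|E_j|^{1/3}\le q^{2/3}$ on $I_5$) together with \eqref{size}, and your observation that factoring the sums in the low regime would reintroduce the logarithm is precisely the point.
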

\begin{proof} We will proceed by the similar ways as in the previous section.
However, the proof of the theorem will be based on (\ref{four}), rather than (\ref{six}) in Lemma \ref{mainlem}. We begin by recalling from  (\ref{four}) and (\ref{sami}) that
\begin{equation}\label{6sam}\|E\ast \widehat{K}\|_{L^{6}(\mathbb F_q^4,dx)}\lesssim \left\{\begin{array}{ll} 
q^{-\frac{19}{6}} |E|^{\frac{5}{6}}\quad &\mbox{if} ~~ 1\leq |E|\leq q\\
q^{-\frac{10}{3}}|E|\quad &\mbox{if} ~~q\leq|E|\leq q^2\\
q^{-3}|E|^{\frac{5}{6}} \quad &\mbox{if} ~~ q^2\leq |E|\leq q^4,\end{array}\right.\end{equation}
and
\begin{equation}\label{sam}\|E\ast \widehat{K}\|_{L^{2}(\mathbb F_q^4,dx)}\lesssim \left\{\begin{array}{ll} 
q^{-\frac{7}{2}} |E|^{\frac{1}{2}}\quad &\mbox{if} ~~ 1\leq |E|\leq q\\
q^{-4}|E|\quad &\mbox{if} ~~q\leq|E|\leq q^2\\
q^{-3}|E|^{\frac{1}{2}} \quad &\mbox{if} ~~ q^2\leq |E|\leq q^4.\end{array}\right.\end{equation}

We must show that for all complex-valued functions $f$ on $\mathbb F_q^4,$ 
$$\|f\ast d\sigma\|_{L^3(\mathbb F_q^4, dx)} \lesssim \|f\|_{L^{\frac{6}{5}}(\mathbb F_q^4, dx)}.$$
As noticed in the previous section, it suffices to prove this inequality under the following assumptions:
$$ \sum_{x\in \mathbb F_q^4}|f(x)|^\frac{6}{5} =1 \quad\mbox{and}\quad f=\sum_{k=0}^\infty 2^{-k} E_k,$$
where $E_0, E_1, \dots$ are disjoint subsets of $\mathbb F_q^4.$ From these assumptions, it is clear that 
\begin{equation}\label{one}
\sum_{j=0}^\infty 2^{-\frac{6j}{5}} |E_j|=1 \quad\mbox{for all}~j=0,1,\dots.
\end{equation}
This clearly implies that
\begin{equation}\label{size} |E_j|\leq 2^{\frac{6j}{5}} \quad\mbox{for all}~j=0,1,\dots.\end{equation}
According to (\ref{same}), it is enough to prove that 
$$ q^{\frac{20}{3}} \|(f\ast \widehat{K})(f\ast \widehat{K})\|_{L^{\frac{3}{2}}(\mathbb F_q^4, dx)} \lesssim 1.$$
Since $f=\sum_{k=0}^\infty 2^{-k} E_k,$ it is enough to show that
$$ q^{\frac{20}{3}} \sum_{k=0}^\infty\sum_{j=0}^\infty 2^{-k-j} \|(E_k\ast \widehat{K})(E_j\ast \widehat{K})\|_{L^{\frac{3}{2}}(\mathbb F_q^4, dx)}\lesssim 1.$$ By the symmetry of $k$ and $j,$ and the H\" older inequality, our task is to prove 
$$q^{\frac{20}{3}}\sum_{k=0}^\infty\sum_{j=k}^\infty 2^{-k-j} 
 \|(E_k\ast \widehat{K})\|_{L^6(\mathbb F_q^4,dx)} \|(E_j\ast \widehat{K})\|_{L^{2}(\mathbb F_q^4, dx)}\lesssim 1.$$
Main steps to prove this inequality are summarized as follows.
By considering the sizes of $|E_k|$ and $|E_j|$, we first decompose  $\sum_{k=0}^\infty\sum_{j=k}^\infty $ as nine parts. Next, using the estimates (\ref{6sam}),(\ref{sam}), (\ref{size}), (\ref{one}), and a convergence property of a geometric series,  we  show that each part of them is $\lesssim 1,$ which completes the proof of Theorem \ref{res2}. For the sake of completeness, we shall give full details. To do this, let us define the following $9$ sets: for $N=\{0,1,\dots \},$
$$I_1=\{(k,j)\in N\times N: k\leq j,~ 1\leq |E_k| \leq q,~ 1\leq |E_j|\leq q\},$$
$$I_2=\{(k,j)\in N\times N: k\leq j,~ 1\leq |E_k|\leq q, ~q<|E_j|\leq q^2\},$$
$$I_3=\{(k,j)\in N\times N: k\leq j,~ 1\leq |E_k|\leq q,~ q^2<|E_j|\leq q^4\},$$
$$I_4=\{(k,j)\in N\times N: k\leq j,~ q< |E_k|\leq q^2,~ 1\leq|E_j|\leq q\},$$
$$I_5=\{(k,j)\in N\times N: k\leq j,~ q< |E_k|\leq q^2,~ q<|E_j|\leq q^2\},$$
$$I_6=\{(k,j)\in N\times N: k\leq j,~ q< |E_k|\leq q^2,~ q^2<|E_j|\leq q^4\},$$
$$I_7=\{(k,j)\in N\times N: k\leq j,~ q^2< |E_k|\leq q^4,~ 1\leq|E_j|\leq q\},$$
$$I_8=\{(k,j)\in N\times N: k\leq j,~ q^2< |E_k|\leq q^4, ~q<|E_j|\leq q^2\},$$
$$I_9=\{(k,j)\in N\times N: k\leq j,~ q^2< |E_k|\leq q^4,~ q^2<|E_j|\leq q^4\}.$$
\subsection{ Estimate of the sum over $I_1$} It follows from (\ref{6sam}) and (\ref{sam}) that
\begin{align*}&q^{\frac{20}{3}}\sum_{(k,j)\in I_1} 2^{-k-j}  \|(E_k\ast \widehat{K})\|_{L^6(\mathbb F_q^4,dx)} \|(E_j\ast \widehat{K})\|_{L^{2}(\mathbb F_q^4, dx)}\\
\lesssim& \sum_{(k,j)\in I_1} 2^{-k-j}|E_k|^{\frac{5}{6}} |E_j|^{\frac{1}{2}}
\leq \sum_{(k,j)\in I_1} 2^{-k-j}|E_k| 2^{\frac{3j}{5}} \quad\mbox{since} \quad|E_j|^{\frac{1}{2}} \leq 2^{\frac{3j}{5}} \quad\mbox{by (\ref{size})}\\
\leq& \sum_{k=0}^\infty |E_k|2^{-k} \sum_{j=k}^\infty 2^{-\frac{2j}{5}} \sim \sum_{k=0}^\infty |E_k| 2^{-\frac{7k}{5}}
\leq \sum_{k=0}^\infty 2^{-\frac{6k}{5}}|E_k| = 1 \quad\mbox{by (\ref{one})}.\end{align*}

\subsection{Estimate of the sum over $I_2$} It follows from (\ref{6sam}) and (\ref{sam}) that
\begin{align*}&q^{\frac{20}{3}}\sum_{(k,j)\in I_2} 2^{-k-j}  \|(E_k\ast \widehat{K})\|_{L^6(\mathbb F_q^4,dx)} \|(E_j\ast \widehat{K})\|_{L^{2}(\mathbb F_q^4, dx)}\\
\lesssim& q^{-\frac{1}{2}} \sum_{(k,j)\in I_2} 2^{-k-j}|E_k|^{\frac{5}{6}} |E_j| \leq q^{-\frac{1}{2}}\sum_{(k,j)\in I_2} 2^{-j}|E_j|\quad \mbox{since}\quad 2^{-k}|E_k|^{\frac{5}{6}}\leq 1 \quad\mbox{by (\ref{size})}\\
\leq&  \sum_{(k,j)\in I_2}2^{-j}|E_j|^{\frac{3}{4}} \quad\mbox{since}\quad |E_j|^{\frac{1}{4}} \leq q^{\frac{1}{2}} \quad\mbox{for}~~ (k,j)\in I_2\\
\leq&\sum_{k=0}^\infty \sum_{j=k}^\infty 2^{-\frac{j}{10}} \quad\mbox{by (\ref{size})}\\
 \sim &\sum_{k=0}^\infty 2^{-\frac{k}{10}} \sim 1. \end{align*}

\subsection{Estimate of the sum over $I_3$} It follows from (\ref{6sam}) and (\ref{sam}) that
\begin{align*}&q^{\frac{20}{3}}\sum_{(k,j)\in I_3} 2^{-k-j}  \|(E_k\ast \widehat{K})\|_{L^6(\mathbb F_q^4,dx)} \|(E_j\ast \widehat{K})\|_{L^{2}(\mathbb F_q^4, dx)}\\
\lesssim&q^{\frac{1}{2}} \sum_{(k,j)\in I_3}  2^{-k-j} |E_k|^{\frac{5}{6}} |E_j|^{\frac{1}{2}}\leq q^{\frac{1}{2}}\sum_{(k,j)\in I_3}  2^{-j}|E_j|^{\frac{1}{2}} \quad\mbox{by (\ref{size})}\\
= & \sum_{(k,j)\in I_3} 2^{-j}|E_j|^{\frac{3}{4}}q^{\frac{1}{2}} |E_j|^{-\frac{1}{4}}<\sum_{(k,j)\in I_3} 2^{-j}|E_j|^{\frac{3}{4}}\quad\mbox{since}\quad q^2<|E_j| ~~\mbox{for} ~~(k,j)\in I_3\\
\leq& \sum_{k=0}^\infty \sum_{j=k}^\infty 2^{-\frac{j}{10}}\sim \sum_{k=0}^\infty2^{-\frac{k}{10}} \sim 1. \end{align*}
where  (\ref{size}) was also used to obtain the last inequality .

\subsection{ Estimate of the sum over $I_4$} It follows from (\ref{6sam}) and (\ref{sam}) that
\begin{align*}&q^{\frac{20}{3}}\sum_{(k,j)\in I_4} 2^{-k-j}  \|(E_k\ast \widehat{K})\|_{L^6(\mathbb F_q^4,dx)} \|(E_j\ast \widehat{K})\|_{L^{2}(\mathbb F_q^4, dx)}\\
\lesssim&q^{-\frac{1}{6}} \sum_{(k,j)\in I_4}  2^{-k-j} |E_k| |E_j|^{\frac{1}{2}}\leq q^{-\frac{1}{6}}\sum_{(k,j)\in I_4} 2^{-k}|E_k| 2^{-j}2^{\frac{3j}{5}}\quad \mbox{by (\ref{size})}\\
\leq & q^{-\frac{1}{6}}\sum_{k=0}^\infty 2^{-k} |E_k| \sum_{j=k}^\infty 2^{-\frac{2j}{5}}\sim q^{-\frac{1}{6}}\sum_{k=0}^\infty 2^{-\frac{7k}{5}}|E_k|\\
\leq& \sum_{k=0}^\infty 2^{-\frac{6k}{5}}|E_k| =1\quad\mbox{by (\ref{one})}.\end{align*}

\subsection{ Estimate of the sum over $I_5$} It follows from (\ref{6sam}) and (\ref{sam}) that
\begin{align*}&q^{\frac{20}{3}}\sum_{(k,j)\in I_5} 2^{-k-j}  \|(E_k\ast \widehat{K})\|_{L^6(\mathbb F_q^4,dx)} \|(E_j\ast \widehat{K})\|_{L^{2}(\mathbb F_q^4, dx)}\\
\lesssim&q^{-\frac{2}{3}} \sum_{(k,j)\in I_5}  2^{-k-j} |E_k| |E_j|\leq \sum_{(k,j)\in I_5} 2^{-k-j}|E_k| |E_j|^{\frac{2}{3}}\quad \mbox{since}~~ |E_j|^{\frac{1}{3}}\leq q^{\frac{2}{3}}  \quad\mbox{for}~~ (k,j)\in I_5 \\
\leq &\sum_{k=0}^\infty 2^{-k} |E_k| \sum_{j=k}^\infty 2^{-\frac{j}{5}}\sim \sum_{k=0}^\infty 2^{-\frac{6k}{5}}|E_k|=1,
\end{align*}
where we used (\ref{size}), the convergence of a geometric series, and (\ref{one}) in the last line.

\subsection{ Estimate of the sum over $I_6$} 
It follows from (\ref{6sam}) and (\ref{sam}) that
\begin{align*}&q^{\frac{20}{3}}\sum_{(k,j)\in I_6} 2^{-k-j}  \|(E_k\ast \widehat{K})\|_{L^6(\mathbb F_q^4,dx)} \|(E_j\ast \widehat{K})\|_{L^{2}(\mathbb F_q^4, dx)}\\
\lesssim&q^{\frac{1}{3}} \sum_{(k,j)\in I_6}  2^{-k-j} |E_k| |E_j|^{\frac{1}{2}}
< \sum_{(k,j)\in I_6} 2^{-k-j}|E_k| |E_j|^{\frac{1}{2} +\frac{1}{6}}\quad \mbox{since}~~ |E_j|^{-\frac{1}{6}}< q^{-\frac{1}{3}}  \quad\mbox{for}~~ (k,j)\in I_6 \\
\leq &\sum_{k=0}^\infty 2^{-k} |E_k| \sum_{j=k}^\infty 2^{-\frac{j}{5}} \sim \sum_{k=0}^\infty  2^{-\frac{6k}{5}}|E_k|=1,
\end{align*}
where (\ref{size}), the convergence of a geometric series, and (\ref{one}) were also applied in the last line.

\subsection{ Estimate of the sum over $I_7$} It follows from (\ref{6sam}) and (\ref{sam}) that
\begin{align*}&q^{\frac{20}{3}}\sum_{(k,j)\in I_7} 2^{-k-j}  \|(E_k\ast \widehat{K})\|_{L^6(\mathbb F_q^4,dx)} \|(E_j\ast \widehat{K})\|_{L^{2}(\mathbb F_q^4, dx)}\\
\lesssim&q^{\frac{1}{6}} \sum_{(k,j)\in I_7}  2^{-k-j} |E_k|^{\frac{5}{6}} |E_j|^{\frac{1}{2}}= q^{\frac{1}{6}} \sum_{(k,j)\in I_7}  2^{-k-j} |E_k||E_k|^{-\frac{1}{6}} |E_j|^{\frac{1}{2}}\\
<& q^{\frac{1}{6} -\frac{1}{3}} \sum_{(k,j)\in I_7} 2^{-k}|E_k| 2^{-\frac{2j}{5}} \quad\mbox{by observing}
\quad  |E_k|^{-\frac{1}{6}}< q^{-\frac{1}{3}} \quad{for}~~(k,j)\in I_7 ~~ \mbox{and by (\ref{size})}\\
\leq& \sum_{k=0}^\infty 2^{-k} |E_k| \sum_{j=k}^\infty 2^{-\frac{2j}{5}} \sim \sum_{k=0}^\infty  2^{-\frac{7k}{5}}|E_k|\leq \sum_{k=0}^\infty  2^{-\frac{6k}{5}}|E_k|=1.
\end{align*}

\subsection{ Estimate of the sum over $I_8$} It follows from (\ref{6sam}) and (\ref{sam}) that
\begin{align*} & q^{\frac{20}{3}}\sum_{(k,j)\in I_8} 2^{-k-j}  \|(E_k\ast \widehat{K})\|_{L^6(\mathbb F_q^4,dx)} 
\|(E_j\ast \widehat{K})\|_{L^{2}(\mathbb F_q^4, dx)}\\
\lesssim &q^{-\frac{1}{3}} \sum_{(k,j)\in I_8}  2^{-k-j} |E_k|^{\frac{5}{6}} |E_j|= q^{-\frac{1}{3}} \sum_{(k,j)\in I_8}  2^{-k-j} |E_k||E_j|^{\frac{2}{3}} |E_k|^{-\frac{1}{6}}|E_j|^{\frac{1}{3}}\\
<& \sum_{(k,j)\in I_8}2^{-k-j}|E_k||E_j|^{\frac{2}{3}} \quad\mbox{since}~~ |E_k|^{-\frac{1}{6}} < q^{-\frac{1}{3}} ,\quad |E_j|^{\frac{1}{3}} \leq q^{\frac{2}{3}} ~~ \mbox{for}~~(k,j)\in I_8\\
\leq& \sum_{k=0}^\infty 2^{-k} |E_k| \sum_{j=k}^\infty 2^{-\frac{j}{5}} \sim \sum_{k=0}^\infty  2^{-\frac{6k}{5}}|E_k|=1.
\end{align*}
\subsection{ Estimate of the sum over $I_9$} It follows from (\ref{6sam}) and (\ref{sam}) that
\begin{align*} & q^{\frac{20}{3}}\sum_{(k,j)\in I_9} 2^{-k-j}  \|(E_k\ast \widehat{K})\|_{L^6(\mathbb F_q^4,dx)} 
\|(E_j\ast \widehat{K})\|_{L^{2}(\mathbb F_q^4, dx)}\\
\lesssim &q^{\frac{2}{3}} \sum_{(k,j)\in I_9}  2^{-k-j} |E_k|^{\frac{5}{6}} |E_j|^{\frac{1}{2}}= q^{\frac{2}{3}} \sum_{(k,j)\in I_9}  2^{-k-j} |E_k||E_j|^{\frac{1}{2}+ \frac{1}{6}} |E_k|^{-\frac{1}{6}}  |E_j|^{-\frac{1}{6}}\\
< &\sum_{(k,j)\in I_9}  2^{-k-j} |E_k||E_j|^{\frac{1}{2}+ \frac{1}{6}} \quad\mbox{since}~~  |E_k|^{-\frac{1}{6}}, |E_j|^{-\frac{1}{6}} < q^{-\frac{1}{3}} ~~\mbox{for}~~(k,j)\in I_9\\
=& \sum_{k=0}^\infty 2^{-k} |E_k| \sum_{j=k}^\infty 2^{-j}|E_j|^{\frac{2}{3}} 
\leq \sum_{k=0}^\infty 2^{-k} |E_k| \sum_{j=k}^\infty 2^{-\frac{j}{5}} \sim \sum_{k=0}^\infty   2^{-\frac{6k}{5}}|E_k|=1, \end{align*}
where we also used (\ref{size}), the convergence of a geometric series, and (\ref{one}) in the last line.
\end{proof}

{\bf Acknowledgment :} The author would like to thank the referee for  his/her valuable comments for developing the final version of this paper.

\end{document}